\newtheorem{theorem}{Theorem}[section]
\newtheorem*{theoremA}{Theorem A}
\newtheorem*{theoremB}{Theorem B}
\newtheorem{corollary}[theorem]{Corollary}
\theoremstyle{definition}
\newtheorem{definition}[theorem]{Definition}
\theoremstyle{remark}
\newtheorem{remark}[theorem]{Remark}
\numberwithin{equation}{section}
\newcommand{\R}{\mathbb{R}}
\newcommand{\N}{\mathbb{N}}
\newcommand{\K}{\mathbb{K}}
\DeclareMathOperator{\dist}{dist\,}
\DeclareMathOperator{\re}{Re}
\newcommand{\nn}[1]{{\left\vert\kern-0.25ex\left\vert\kern-0.25ex\left\vert #1 
		\right\vert\kern-0.25ex\right\vert\kern-0.25ex\right\vert}}
\renewcommand{\geq}{\geqslant}
\renewcommand{\leq}{\leqslant}
\newcommand{\NA}{\operatorname{NA}}
\newcommand{\pten}{\ensuremath{\widehat{\otimes}_\pi}}
\newcommand{\eps}{\varepsilon}
\begin{document}
	
	\title{A characterization of a local vector valued Bollob\'as theorem}
	
	\author[Dantas]{Sheldon Dantas}
	\address[Dantas]{Departament de Matem\`{a}tiques and Institut Universitari de Matem\`{a}tiques i Aplicacions de Castell\'o (IMAC), Universitat Jaume I, Campus del Riu Sec. s/n, 12071 Castell\'o, Spain \newline
		\href{http://orcid.org/0000-0001-8117-3760}{ORCID: \texttt{0000-0001-8117-3760} } }
	\email{\texttt{dantas@uji.es}}
 	\urladdr{\url{sheldondantas.com}}

	\author[Rueda Zoca]{Abraham Rueda Zoca}
	\address[Rueda Zoca]{Universidad de Murcia, Departamento de Matem\'aticas, Campus de Espinardo 30100 Murcia, Spain
		\newline
		\href{https://orcid.org/0000-0003-0718-1353}{ORCID: \texttt{0000-0003-0718-1353} }}
	\email{\texttt{abraham.rueda@um.es}}
	\urladdr{\url{https://arzenglish.wordpress.com}}

	\begin{abstract} In this paper, we are interested in giving two characterizations for the so-called property {\bf L}$_{o,o}$, a local vector valued Bollob\'as type theorem. We say that $(X, Y)$ has this property whenever given $\eps > 0$ and an operador $T: X \rightarrow Y$, there is $\eta = \eta(\eps, T)$ such that if $x$ satisfies $\|T(x)\| > 1 - \eta$, then there exists $x_0 \in S_X$ such that $x_0 \approx x$ and $T$ itself attains its norm at $x_0$. This can be seen as a strong (although local) Bollob\'as theorem for operators. We prove that the pair $(X, Y)$ has the {\bf L}$_{o,o}$ for compact operators if and only if so does $(X, \K)$ for linear functionals. This generalizes at once some results due to D. Sain and J. Talponen. Moreover, we present a complete characterization for when $(X \pten Y, \K)$ satisfies the {\bf L}$_{o,o}$ for linear functionals under strict convexity or Kadec-Klee property assumptions in one of the spaces. As a consequence, we generalize some results in the literature related to the strongly subdifferentiability of the projective tensor product and show that $(L_p(\mu) \times L_q(\nu); \K)$ cannot satisfy the {\bf L}$_{o,o}$ for bilinear forms.
	\end{abstract}

	\thanks{ }

\subjclass[2020]{Primary 46B04; Secondary 46A32, 46B20, 46B10}
\keywords{norm attaining operators; Bishop-Phelps theorem, Bishop-Phelps-Bollobás theorem, projective tensor products, compact operators}
	
	\maketitle

	\thispagestyle{plain}

	\section{Introduction}

It has now been 60 years since Bishop and Phelps proved that every bounded linear functional can be approximated by norm-attaining ones \cite{BP}. Since then, several researchers have been working in norm-attaining theory in many different directions and it is out of doubt one of the most traditional topics in Functional Analysis nowadays. Bollob\'as \cite{Bol} pushed further the Bishop-Phelps theorem  by proving that if $\eps > 0$, then there exists $\eta(\eps) > 0$ such that whenever $x^* \in S_{X^*}$ and $x \in S_X$ satisfy $|x^*(x)| > 1 - \eta(\eps)$, then there exist a new functional $x_0^* \in S_{X^*}$ and a new element $x_0 \in S_X$ such that 
\begin{equation} \label{bollobasconditions}
|x_0^*(x_0)| = 1, \ \ \ \ \|x_0 - x\| < \eps, \ \ \ \ \mbox{and} \ \ \ \ \|x_0^* - x^*\| < \eps. 	
\end{equation}

Let us notice the Bishop-Phelps theorem plays an important role in non-reflexive spaces since otherwise {\it every} functional attains its norm. On the other hand, Bollob\'as theorem does make sense in the reflexive setting and, in this case, the functional $x^*$ necessarily attains its norm; so it would be natural to wonder whether a version of Bollob\'as theorem {\it without} changing the initial functional $x^*$ holds in general, that is, whether it is possible to take $x_0^* = x^*$ in (\ref{bollobasconditions}). In a more general situation, we are wondering the following: given $\eps > 0$, is it possible to find $\eta(\eps) > 0$ such that whenever $T \in \mathcal{L}(X, Y)$ with $\|T\| = 1$ and $x \in S_X$ satisfy $\|T(x)\| > 1 - \eta(\eps)$, one can find a new element $x_0 \in S_X$ such that $\|T(x_0)\| = 1$ and $\|x_0 - x\| < \eps$? It is easy to see that the pair $(X, \K)$ satisfies it whenever $X$ is a uniformly convex Banach space and it turns out that this is in fact a characterization for uniformly convex spaces (see \cite[Theorem 2.1]{KL}). Nevertheless, there is no way of getting such a similar statement for linear operators: indeed, the authors in \cite{DKKLM} proved that if $X$ and $Y$ are real Banach spaces of dimension greater than or equal to 2, then the pair $(X, Y)$ always fails such a property. Therefore, {\it the only} hope for getting positive results in the context of operators would be by considering a weakening of the mentioned property and that was done in \cite{D, DKLM, DKLM0, Deb, T} (and more recently in \cite{DGJR, DJRR} as a tool to get positive results on different norm-attainment notions). More specifically, we have the following property.

\begin{definition} Let $X, Y$ be Banach spaces. We say that the pair $(X, Y)$ has the {\bf L}$_{o,o}$ for operators if given $\eps > 0$ and $T \in \mathcal{L}(X, Y)$ with $\|T\| = 1$, there exists $\eta(\eps, T) > 0$ such that whenever $x \in S_X$ satisfies $\|T(x)\| > 1 - \eta(\eps, T)$, 
there exists $x_0 \in S_X$ such that
\begin{equation*}
	\|T(x_0)\| = 1 \ \ \ \ \mbox{and} \ \ \ \ \|x_0 - x\| < \eps. 
\end{equation*}
\end{definition}
Notice that if the pair $(X, Y)$ satisfies such a property, we are saying that {\it every} operator has to be norm-attaining and, consequently, the Banach space $X$ must be reflexive by the James theorem. By using a result due to G. Godefroy, V. Montesinos, and V. Zizler \cite{GMZ} and a characterization by C. Franchetti and R. Pay\'a \cite{FP}, it turns out that the pair $(X, \K)$ has the {\bf L}$_{o,o}$ for linear functionals if and only if $X^*$ is strongly subdifferentiable (SSD, for short; see its definition below). On the other hand, at the same way that it happens in the classical norm-attainment theory (see, for instance, \cite{martinjfa}), the {\bf L}$_{o,o}$ was studied for compact operators \cite{Deb, T}.  It is known that whenever $X$ is strictly convex, the {\bf L}$_{o,o}$ for compact operators is equivalent to saying that the dual $X^*$ is Fr\'echet differentiable (see \cite[Theorem 2.3]{T}); and when $X$ satisfies the Kadec-Klee property then $(X, Y)$ has the {\bf L}$_{o,o}$ for compact operators for every Banach space $Y$ (see \cite[Theorem 2.12]{Deb}).

Our first aim in the present paper is to generalize \cite[Theorem 2.3]{T} and \cite[Theorem 2.12]{Deb} at once. Indeed, we have the following theorem.

\begin{theoremA} \label{characterization:compactos} Let $X$ be a reflexive Banach space. The following are equivalent.
\begin{enumerate}
\item[(i)] The pair $(X, Y)$ satisfies the {\bf L}$_{o,o}$ for compact operators for every Banach space $Y$.  
\item[(ii)] The pair $(X,\mathbb K)$ has the {\bf L}$_{o,o}$ (equivalently, $X^*$ is SSD).
\end{enumerate}
\end{theoremA}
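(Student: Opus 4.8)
For the implication (i) $\Rightarrow$ (ii) there is essentially nothing to prove: taking $Y=\K$, every $x^*\in X^*=\mathcal{L}(X,\K)$ is a rank-one, hence compact, operator, so (i) says precisely that $(X,\K)$ has the {\bf L}$_{o,o}$ for all linear functionals, which by the Godefroy--Montesinos--Zizler and Franchetti--Pay\'a results recalled above is the same as $X^*$ being SSD. So the content of the theorem is the implication (ii) $\Rightarrow$ (i). The plan is to fix a Banach space $Y$, a compact operator $T\colon X\to Y$ with $\|T\|=1$, and $\eps>0$, and to argue by contradiction: if $T$ witnesses the failure of the {\bf L}$_{o,o}$ for this $\eps$, there is a sequence $(x_n)\subseteq S_X$ with $\|T(x_n)\|\to 1$ while $\dist(x_n,A)\ge\eps$ for every $n$, where $A:=\{x_0\in S_X:\|T(x_0)\|=1\}$.

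The idea is to replace $T$ by a single scalar functional to which hypothesis (ii) can be applied. For each $n$, using that $B_{Y^*}$ is weak$^*$-compact, choose $y_n^*\in S_{Y^*}$ with $y_n^*(T(x_n))=\|T(x_n)\|$ (the supremum defining $\|T(x_n)\|$ is attained, and a rotation makes the value nonnegative), and put $x_n^*:=T^*y_n^*\in X^*$. Then $\|x_n^*\|\le\|T\|=1$ and $x_n^*(x_n)=\|T(x_n)\|\to 1$, so $\|x_n^*\|\to 1$. Since $T$ is compact, $T^*$ is compact, so the sequence $(x_n^*)=(T^*y_n^*)$ has a norm-convergent subsequence; relabelling, $x_n^*\to x^*$ in $X^*$ with $x^*\in S_{X^*}$, and since $|x^*(x_n)-x_n^*(x_n)|\le\|x^*-x_n^*\|\to 0$ we obtain $x^*(x_n)\to 1$.

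Next I would apply (ii), the {\bf L}$_{o,o}$ for the pair $(X,\K)$, to the fixed functional $x^*\in S_{X^*}$ and to $\eps$: there is $\eta=\eta(\eps,x^*)>0$ such that every $z\in S_X$ with $|x^*(z)|>1-\eta$ admits $z_0\in S_X$ with $|x^*(z_0)|=1$ and $\|z_0-z\|<\eps$. For $n$ large enough that $|x^*(x_n)|>1-\eta$, this yields $z_n\in S_X$ with $|x^*(z_n)|=1$ and $\|z_n-x_n\|<\eps$. The crucial point is that any such $z_n$ must lie in $A$: since $x^*(z)=\lim_m x_m^*(z)=\lim_m y_m^*(T(z))$ for every $z\in X$, the equality $|x^*(z_n)|=1$ forces $\lim_m|y_m^*(T(z_n))|=1$, and as $|y_m^*(T(z_n))|\le\|T(z_n)\|\le\|T\|\,\|z_n\|=1$ we get $\|T(z_n)\|=1$. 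Hence $\dist(x_n,A)\le\|x_n-z_n\|<\eps$, contradicting $\dist(x_n,A)\ge\eps$, and (i) follows. I expect the one genuinely delicate step to be this passage to the adjoint: compactness of $T^*$ is what lets one collapse the a priori varying near-norming functionals $y_n^*$ into a single limit $x^*$, and it is precisely the fact that $x^*$ lies in the norm-compact set $T^*(B_{Y^*})$ (being a norm-limit of functionals $T^*y^*$ with $\|y^*\|\le 1$) that turns a point where $x^*$ attains its norm into a point where $T$ attains its norm. Incidentally, reflexivity of $X$ is not actually used in this argument, being automatic from (ii) via the James theorem.
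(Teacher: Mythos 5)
Your proof is correct, and the backward implication takes a genuinely different (though closely related) route from the paper's. For (i) $\Rightarrow$ (ii) you simply specialize to $Y=\K$; the paper instead builds the rank-one operator $x\mapsto x^*(x)y_0$ into an arbitrary $Y$, which yields the formally stronger fact that the {\bf L}$_{o,o}$ for compact operators into a \emph{single} nonzero $Y$ already forces (ii) --- only a difference of emphasis, since (i) quantifies over all $Y$. For (ii) $\Rightarrow$ (i), the paper uses reflexivity of $X$ to extract a weak limit $x_0$ of the bad sequence $(x_n)$, upgrades $T(x_n)\to T(x_0)$ to norm convergence via compactness of $T$, concludes $x_0\in\NA(T)$, and then applies (ii) to the single functional $x_0^*=T^*y_0^*$ where $y_0^*$ norms $T(x_0)$; the identity $x_0^*=T^*y_0^*$ is what converts $|x_0^*(x_n')|=1$ into $\|T(x_n')\|=1$. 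You instead norm each $T(x_n)$ by some $y_n^*$, pull back by $T^*$, and invoke Schauder's theorem to extract a norm-limit $x^*$ of $(T^*y_n^*)$; your observation that $|x^*(z)|=1$ forces $\|T(z)\|=1$, because $x^*$ is a norm-limit of functionals dominated pointwise by $\|T(\cdot)\|$, plays exactly the role of the paper's identity. Your version buys a little: it never uses reflexivity of $X$ in this implication, which substantiates your closing remark that reflexivity is automatic from (ii) by James' theorem; the paper's version is slightly more geometric, exhibiting the weak limit $x_0$ as an explicit norm-attaining point to which the tail of $(x_n)$ is close. Both arguments are complete.
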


Our second main result deals with a strengthening of the {\bf L}$_{o,o}$ in the context of bilinear forms (see \cite{DKLM}).

\begin{definition}\cite[Definition 2.1]{DKLM} Let $X, Y$ be Banach spaces. We say that $(X \times Y; \K)$ has the {\bf L}$_{o,o}$ for bilinear forms if given $\eps > 0$ and $B \in \mathcal{B}(X \times Y; \K)$ with $\|B\| = 1$, there exists $\eta (\eps, B) > 0$ such that whenever $(x, y) \in S_X \times S_Y$ satisfies $|B(x, y)| > 1 - \eta(\eps, B)$, there exists $(x_0, y_0) \in S_X \times S_Y$ such that
	\begin{equation*}
		|B(x_0, y_0)| = 1, \ \ \ \ \|x_0 - x\| < \eps, \ \ \ \ \mbox{and} \ \ \ \ \|y_0 - y\| < \eps.
	\end{equation*}
\end{definition}
\noindent
It is known that $(X \times Y; \K)$ satisfies the {\bf L}$_{o,o}$ for bilinear forms whenever 
\begin{itemize}
	\item[(a)] $X, Y$ are finite dimensional;
	\item[(b)] $X$ finite dimensional and $Y$ uniformly convex; 
	\item[(c)] $X = \ell_p$ and $Y = \ell_q$ if and only if $p > q'$, where $q'$ is the conjugate index of $q$.
\end{itemize}
(Proposition 2.2.(a), Lemma 2.6, and Theorem 2.7.(b) of \cite{DKLM}, respectively). By observing items (a), (b), and (c) above, one might think that the reflexivity of $X \pten Y$ plays an important role here (notice that (c) gives the result for $\ell_p$-spaces {\it exactly} when the projective tensor product $\ell_p \pten \ell_q$ is reflexive (see \cite[Corollary 4.24]{rya})). And this is indeed not a coincidence: we have the following result, which gives a complete characterization for the {\bf L}$_{o,o}$ in terms of the reflexivity of $X \pten Y$ and also relates the {\bf L}$_{o,o}$ in different classes of functions under strict convexity or Kadec-Klee property assumptions on $X$.

\begin{theoremB} \label{characterization:tensor} Let $X$ be a strictly convex Banach space or a Banach space satisfying the Kadec-Klee property. Let $Y$ be an arbitrary Banach space and assume that either $X$ or $Y$ enjoys the approximation property. The following are equivalent.
\begin{itemize}
	\item[(i)] $(X \pten Y, \K)$ has the {\bf L}$_{o,o}$ for linear functionals.  
	\item[(ii)] $X \pten Y$ is reflexive and both $(X, \K), (Y, \K)$ have the {\bf L}$_{o,o}$ for linear functionals. 
	\item[(iii)] $(X \times Y; \K)$ has the {\bf L}$_{o,o}$ for bilinear forms. 
\end{itemize}
\end{theoremB}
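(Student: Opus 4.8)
The plan is to work throughout with the isometric identification $(X\pten Y)^{*}=\mathcal{B}(X\times Y;\K)=\mathcal{L}(X,Y^{*})$ and to reduce $(i)$ to the statement that $\mathcal{L}(X,Y^{*})$ is SSD, via the equivalence recalled in the introduction (Godefroy--Montesinos--Zizler and Franchetti--Pay\'a), which also tells us that $(X,\K)$ (resp.\ $(Y,\K)$) has the {\bf L}$_{o,o}$ for linear functionals if and only if $X^{*}$ (resp.\ $Y^{*}$) is SSD. Every reflexivity assertion below comes from James' theorem: under $(i)$ every functional on $X\pten Y$ attains its norm, and under $(iii)$ every bilinear form attains its norm already at an elementary tensor, hence so does every functional on $X\pten Y$; in either case $X\pten Y$ is reflexive, and so then are its closed subspaces $\{x\otimes y_{0}\colon x\in X\}$ and $\{x_{0}\otimes y\colon y\in Y\}$, i.e.\ $X$ and $Y$. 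I would then prove $(i)\Rightarrow(ii)$, $(iii)\Rightarrow(ii)$, $(ii)\Rightarrow(iii)$ and $(ii)\Rightarrow(i)$; the first two implications are soft and the last two carry the analytic content.

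For $(i)\Rightarrow(ii)$: fix $x^{*}\in S_{X^{*}}$ and $\eps>0$, pick any $y^{*}\in S_{Y^{*}}$ and, by reflexivity of $Y$, a point $y_{0}\in S_{Y}$ with $y^{*}(y_{0})=1$, and set $\Phi:=x^{*}\otimes y^{*}\in S_{(X\pten Y)^{*}}$. If $x\in S_{X}$ almost attains $x^{*}$ then $x\otimes y_{0}\in S_{X\pten Y}$ almost attains $\Phi$, so by $(i)$ there is $u_{0}\in S_{X\pten Y}$ with $\Phi(u_{0})=1$ and $\norm{u_{0}-x\otimes y_{0}}_{\pi}$ as small as we please. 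Pushing $u_{0}$ through the norm-one operator $\id_{X}\otimes y^{*}\colon X\pten Y\to X$ and using the identities $x^{*}\circ(\id_{X}\otimes y^{*})=\Phi$ and $(\id_{X}\otimes y^{*})(x\otimes y_{0})=x$, the point $x_{1}:=(\id_{X}\otimes y^{*})(u_{0})$ satisfies $\norm{x_{1}}\leq1$ and $x^{*}(x_{1})=1$, so $x_{1}\in S_{X}$ attains $x^{*}$ and $\norm{x_{1}-x}$ is small; the symmetric argument handles $Y$. Repeating the very same computation, but starting from and landing on elementary tensors, proves $(iii)\Rightarrow(ii)$.

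For $(ii)\Rightarrow(iii)$ I would use Theorem A. Under $(ii)$, $X\pten Y$ is reflexive, and together with the approximation property of $X$ or $Y$ this forces $\mathcal{L}(X,Y^{*})=\mathcal{K}(X,Y^{*})$, so every norm-one bilinear form $B$ is represented by a compact operator $T\colon X\to Y^{*}$ with $\norm{T}=1$; and since $X$ is reflexive with $X^{*}$ SSD, $(X,\K)$ has the {\bf L}$_{o,o}$, so Theorem A gives that $(X,Y^{*})$ has the {\bf L}$_{o,o}$ for compact operators. If $(x,y)\in S_{X}\times S_{Y}$ almost attains $B$, the chain $1\approx|B(x,y)|=|\langle Tx,y\rangle|\leq\norm{Tx}\leq\norm{T}=1$ shows that $x$ almost attains $\norm{T}$, so by the {\bf L}$_{o,o}$ for the compact operator $T$ there is $x_{0}\in S_{X}$ close to $x$ with $\norm{Tx_{0}}=1$. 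Then $z^{*}:=Tx_{0}\in S_{Y^{*}}$ is almost attained at $y$, and — the key point — as $x_{0}$ ranges over the norm-attaining points of $T$ these $z^{*}$ all lie in the norm-compact set $\overline{T(B_{X})}\cap S_{Y^{*}}$; since $Y^{*}$ is SSD, a version of the {\bf L}$_{o,o}$ for $(Y,\K)$ that is uniform over norm-compact subsets of $S_{Y^{*}}$ (which follows from SSD of $Y^{*}$ by a Dini-type compactness argument) produces $y_{0}\in S_{Y}$ close to $y$ with $|\langle z^{*},y_{0}\rangle|=|B(x_{0},y_{0})|=1$, which is $(iii)$. For $(ii)\Rightarrow(i)$ one runs the same scheme on a finite representation $u=\sum_{i}a_{i}\otimes b_{i}$ of a norm-one element almost attaining the functional $\Phi$ corresponding to $T$, with $\sum_{i}\norm{a_{i}}\norm{b_{i}}$ close to $\norm{u}_{\pi}$: applying the chain termwise, the non-negligible $\hat{a}_{i}$ almost attain $\norm{T}$, one perturbs $\hat{a}_{i}$ to $a_{i}'\in S_{X}$ with $\norm{Ta_{i}'}=1$ and then $\hat{b}_{i}$ to $b_{i}'\in S_{Y}$ with $\langle Ta_{i}',b_{i}'\rangle=1$ (all perturbations small), and the element $u_{0}:=\sum_{i}\norm{a_{i}}\norm{b_{i}}\,a_{i}'\otimes b_{i}'$ (summing over the non-negligible indices) satisfies $\Phi(u_{0})=\norm{u_{0}}_{\pi}\approx1$ because the chain above is an equality at each step; hence $u_{0}/\norm{u_{0}}_{\pi}$ lies in the face $\{w\in S_{X\pten Y}\colon\Phi(w)=1\}$ and is $\pi$-close to $u$, which is exactly the SSD estimate for $\Phi$.

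The step I expect to be the main obstacle is this last one, for two reasons. First, the functional $z^{*}=Tx_{0}$ is not known in advance — it depends on the starting point $x$ through $x_{0}$ — so the pointwise {\bf L}$_{o,o}$ modulus for $(Y,\K)$ is useless here, and one really needs uniformity of that modulus over a norm-compact set of functionals; this is precisely where the compactness of $T$, i.e.\ the reflexivity of $X\pten Y$ together with the approximation property, is indispensable. Second, the strict convexity or Kadec--Klee hypothesis on $X$ is what controls the geometry of $B_{X\pten Y}$ near its norm-attaining points, guaranteeing that the perturbed elementary tensors $a_{i}'\otimes b_{i}'$ reassemble into something genuinely $\pi$-close to $u$ and genuinely in the face of $\Phi$; without it a finite representation of $u$ can be too spread out for the renormalisation to remain close to $u$, and this is exactly the gap separating $(i)$ from $(iii)$. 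The remaining work — calibrating the accuracies of the $X$- and $Y$-perturbations against the (now uniform) moduli and absorbing the negligible terms of the representation — is routine once these two points are in place.
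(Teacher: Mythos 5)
Your soft implications are fine, and your proof of (i)$\Rightarrow$(ii) via the slice operator $\id_X\otimes y^*$ is a legitimate, more self-contained alternative to the paper's route (which quotes the SSD characterization and the stability of SSD under passing to the subspaces $X^*,Y^*$ of $(X\pten Y)^*$). The genuine gap is in (ii)$\Rightarrow$(iii) (and it recurs in your (ii)$\Rightarrow$(i)): the lemma you lean on --- that the {\bf L}$_{o,o}$ modulus of $(Y,\K)$ can be made uniform over norm-compact subsets of $S_{Y^*}$ by a ``Dini-type compactness argument'' --- is false. Take $Y=\ell_\infty^2$, so $Y$ is reflexive, $Y^*=\ell_1^2$ is SSD and $(Y,\K)$ has the {\bf L}$_{o,o}$; let $K=\{z_\delta^*=(1-\delta,\delta):\ 0\le\delta\le 1/2\}$, a norm-compact subset of $S_{\ell_1^2}$. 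For $\delta>0$ the only points of $S_Y$ where $|z_\delta^*|$ equals $1$ are $\pm(1,1)$, while $y=(1,-1)$ gives $z_\delta^*(y)=1-2\delta$ and $\dist(y,\NA(z_\delta^*))=2$; hence $\eta(\eps,z_\delta^*)\le 2\delta\to 0$ and no uniform modulus exists on $K$. The compactness argument fails because the sets $\NA(z^*)$ are not lower semicontinuous in $z^*$: a norm-limit $z^*$ may attain its norm on a large face while nearby $z_\delta^*$ attain only at a far-away corner. Since in your scheme the functional $z^*=Tx_0$ genuinely varies with the input pair $(x,y)$, this is not a calibration issue but a missing idea.

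It is symptomatic that your (ii)$\Rightarrow$(iii) never invokes the strict convexity / Kadec--Klee hypothesis on $X$, which is exactly where the paper needs it. The paper argues by contradiction with sequences: reflexivity of $X\pten Y$ plus the AP makes the associated operator $T_B$ compact, so $x_n\otimes y_n\to x_0\otimes y_0$ weakly and $B(x_0,y_0)=1$; then the Kadec--Klee property, or strict convexity via Fr\'echet differentiability of $X^*$ and the \v{S}mulyan lemma, forces $\norm{x_n-x_0}\to 0$ to a \emph{single} limit $x_0$, and the pointwise {\bf L}$_{o,o}$ of $(Y,\K)$ is then applied to the one fixed functional $B(x_0,\cdot)$ --- no uniformity over a family of functionals is ever required. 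To repair your proof you must replace the uniform-modulus lemma by a mechanism of this kind that pins the first coordinate down to one limit point. Note finally that the paper's (iii)$\Rightarrow$(i) is a direct perturbation of representations $z=\sum_n\lambda_n x_n\otimes y_n$ using only the bilinear modulus $\eta(\eps,B)$ of the single form $B$ (no strict convexity, no AP); your (ii)$\Rightarrow$(i) inherits the same uniformity gap and is in any case redundant once (ii)$\Rightarrow$(iii) and (iii)$\Rightarrow$(i) are available.
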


As a consequence of Theorem B, we have that $(L_p(\mu) \times L_q(\nu); \K)$ {\it cannot} satisfy the {\bf L}$_{o,o}$ for bilinear forms for every $1 < p, q < \infty$ since $L_p (\mu) \pten L_q (\nu)$ is never reflexive (see \cite[Theorem 4.21 and Corollary 4.22]{rya}). We conclude the paper with a discussion about the relation between the different properties {\bf L}$_{o,o}$ in $\mathcal B(X\times Y,\mathbb K)$.

\subsection{Terminology and Background} Here will be working with Banach spaces over the real or complex field $\K$. The unit ball and unit sphere of a Banach space $X$ are denoted by $B_X$ and $S_X$, respectively. The symbols $\mathcal{L}(X, Y)$ and $\mathcal{B}(X \times Y; \K)$ stand for the (bounded) linear  operators and bilinear forms, respectively. When $Y =\K$, $\mathcal{L}(X, Y)$ becomes simply $X^*$, the topological dual space of $X$. We say that $T \in \mathcal{L}(X, Y)$ attains its norm if $\|T(x_0)\| = \|T\|$ for some $x_0 \in S_X$ and we say that $B \in \mathcal{B}(X \times Y; \K)$ attains its norm if $|B(x_0, y_0)| = \|B\|$ for some $(x_0, y_0) \in S_X \times S_Y$.

The norm of $X$ is said to be strongly subdifferentiable (SSD, for short) at the point $x\in X$ if the one-side limit 
\begin{equation*} 
\lim_{t \rightarrow 0^+} \frac{\|x + th\| - \|x\|}{t} 
\end{equation*} 
exists uniformly for $h \in B_X$. Let us notice that the norm of $X$ is Fr\'echet differentiable at $x$ if and only it is Gâteux differentiable and SSD at $x$. When we say that $X$ is SSD we mean that the norm of $X$ is SSD at every $x \in S_X$.

The projective tensor product of two Banach spaces $X$ and $Y$ is the completion of $X \otimes Y$ endowed with the norm given by 
\begin{equation*}
\|z\|_{\pi} = \inf \left\{ \sum_{n=1}^{\infty} \|x_n\| \|y_n\|: \sum_{n=1}^{\infty} \|x_n\| \|y_n\| < \infty, z = \sum_{n=1}^{\infty} x_n \otimes y_n \right\}. 
\end{equation*}
We denote the projective tensor product of $X$ and $Y$ endowed with the above norm by $X \pten Y$. It is well-known (and we will be using these facts with no explicit mention throughout the paper) that $\|x \otimes y\| = \|x\| \|y\|$ for every $x \in X$ and $y \in Y$, and that the closed unit ball of $X \pten Y$ is the closed convex hull of the $B_X \otimes B_Y = \{x \otimes y: x  \in B_X, y \in B_Y\}$.	Moreover, we have that $(X \pten Y)^* = \mathcal{B}(X \times Y; \K)$ under the action of a bounded bilinear form $B$ as a bounded linear functional on $X \pten Y$ given by
\begin{equation*}
	\left\langle B, \sum_{n=1}^{\infty} x_n \otimes y_n \right\rangle = \sum_{n=1}^{\infty} B(x_n, y_n)
\end{equation*}
and $(X \pten Y)^* = \mathcal{L}(X, Y^*)$ under the action of a bounded linear operator $T$ as a bounded linear functional on $X \pten Y$ given by
\begin{equation*}
	\left\langle T,  \sum_{n=1}^{\infty} x_n \otimes y_n \right \rangle = \sum_{n=1}^{\infty} \langle y_n, T(x_n) \rangle.
\end{equation*}	
Analogously, we have that $(X \pten Y)^* = \mathcal{L}(Y, X^*)$. 

Recall that a Banach space is said to have the approximation property (AP, in short) if for every compact subset $K$ of $X$ and every $\eps > 0$, there exists a finite-rank operator $T: X \longrightarrow X$ such that $\|T(x) - x\| \leq \eps$ for every $x \in K$. We refer the reader to \cite{rya} for background on the beautiful tensor products of Banach spaces and approximation properties theories.

Finally, let us recall that a Banach space $X$ satisfies the Kadec-Klee property if weak and norm topologies coincide in the unit sphere of $X$.

\section{Proofs of Theorems A and B} 	
	
We start this section by giving the proof of Theorem A.

\begin{proof}[Proof of Theorem A] (i)$\Rightarrow$(ii). Suppose that $(X, Y)$ has the {\bf L}$_{o,o}$ for compact operators.  Let $\eps > 0$ and $x^* \in S_{X^*}$ be given, and let us prove that $(X, \K)$ has the {\bf L}$_{o,o}$ for linear functionals. Define $T: X \longrightarrow Y$ by $T(x) := x^*(x) y_0$ for some $y_0 \in S_Y$. Then, $\|T\| = \|x^*\| = 1$ and $T$ is compact. By hypothesis, there is $\eta(\eps, T) > 0$ witnessing the definition of the property {\bf L}$_{o,o}$. Let us set $\eta(\eps, x^*) := \eta(\eps, T) > 0$. Let $x_0 \in S_X$ be such that $|x^*(x_0)| > 1 - \eta(\eps, T)$. Then, $\|T(x_0)\| = \|x^*(x_0) y_0\| = |x^*(x_0)| > 1 - \eta(\eps, T)$ and by the assumption there is $x_1 \in S_X$ such that $\|T(x_1)\| = 1$ and $\|x_1 - x_0\| < \eps$. Then, $|x^*(x_1)| = \|T(x_1)\| = 1$ and $\|x_1 - x_0\| < \eps$, that is, $(X, \K)$ has the {\bf L}$_{o,o}$ for linear functionals.

\vspace{0.2cm}
\noindent
(ii) $\Rightarrow$ (i). Suppose that $(X, \K)$ has the {\bf L}$_{o,o}$ for linear functionals. By contradiction, suppose that there exist $\eps_0 > 0$, $T \in \mathcal{K}(X, Y)$ with $\|T\| = 1$, and $(x_n) \subseteq S_X$ such that
\begin{equation} \label{ineq1}
	1 \geq \|T(x_n)\| \geq 1 - \frac{1}{n}
\end{equation}
but satisfying that $\dist(x_n, \NA(T)) \geq \eps_0$, where $\NA(T) = \{x \in S_X: \|T(x)\| = \|T\|\}$. Since $X$ is reflexive and $(x_n)_{n=1}^{\infty}$ is bounded, we may (and we do) assume that $x_n \stackrel{w}{\longrightarrow} x_0$ for some $x_0 \in B_X$. Since $T$ is a compact operator, we have that $T(x_n) \stackrel{\|\cdot\|}{\longrightarrow} T(x_0)$. By (\ref{ineq1}), we have that $\|T(x_0)\| = 1$ and, in particular, $x_0 \in S_X$. Let us take $y_0^* \in S_{Y^*}$ to be such that $y^*(T(x_0)) = \|T(x_0)\| = 1$. Consider $x_0^* := T^* y_0^* \in S_{X^*}$. Then $x_0^*(x_0) = T^*y_0^*(x_0) = y^*(T(x_0)) =  1$. Since $x_n \stackrel{w}{\longrightarrow} x_0$, we have that $x_0^*(x_n) \longrightarrow x_0^*(x_0) = 1$ as $n \rightarrow \infty$. Since $(X, \K)$ has the {\bf L}$_{o,o}$ for linear functionals, there is $(x_n') \subseteq S_X$ such that $x_0^*(x_n') = 1$ and $\|x_n' - x_n\| \rightarrow 0$ for every $n \in \N$. This shows that 
\begin{equation*}
	1 = x_0^*(x_n') = T^* y_0^*(x_n') = y_0^*(T(x_n')),
\end{equation*}	
that is, $1 = y_0^*(T(x_n')) \leq \|T(x_n')\| \leq \|T\| = 1$, that is, $\|T(x_n')\| = 1$ and then $x_n' \in \NA(B)$. The convergence $\|x_n' - x_n\| \rightarrow 0$ yields the desired contradiction.
\end{proof}

\begin{remark} We have that \cite[Theorem 2.3]{T} says that $X$ is strictly convex and the pair $(X, Y)$ has the {\bf L}$_{o,o}$ for compact operators if and only if $X^*$ is Fr\'echet differentiable. Let us notice that $X^*$ is Fr\'echet differentiable if and only $X$ is strictly convex and $(X, \K)$ has the  {\bf L}$_{o,o}$ (see \cite[Theorem 2.5]{DKLM0}). Therefore, Theorem \ref{characterization:compactos} generalizes \cite[Theorem 2.3]{T} as we no longer need strict convexity on $X$. On the other hand, \cite[Theorem 2.12]{Deb} says that if $X$ is a reflexive space which satisfies the Kadec-Klee property, then $(X, Y)$ has the {\bf L}$_{o,o}$ for compact operators for every $Y$. This is also covered by our Theorem \ref{characterization:compactos} since whenever $X$ is a reflexive space satisfying the Kadec-Klee property, the pair $(X, \K)$ satisfies the {\bf L}$_{o,o}$ (see \cite[Propositions 2.2 and 2.6]{DKLM0}). 
\end{remark}

We now present the proof of Theorem B.

\begin{proof}[Proof of Theorem B] (i) $\Rightarrow$ (ii). Suppose that $(X \pten Y, \K)$ has the {\bf L}$_{o,o}$ for linear functionals. By \cite[Theorem 2.3]{DKLM0}, $X \pten Y$ is reflexive and $(X \pten Y)^*$ is SSD. Since $X^*, Y^*$ are closed subspaces of $(X \pten Y)^* = \mathcal{L}(X, Y^*) = \mathcal{L}(Y^*, X)$, we have that both $X^*, Y^*$ are SSD \cite{FP}. Therefore, $(X, \K)$ and $(Y, \K)$ satisfy the {\bf L}$_{o,o}$ for linear functionals.

For the proofs of (ii) $\Rightarrow$ (iii) and (iii) $\Rightarrow$ (i), we assume that both $X, Y$ are real Banach spaces. We invite the reader to go to Remark \ref{complexcase} below for some comments on the complex case.

\vspace{0.2cm}
\noindent
(ii) $\Rightarrow$ (iii). Suppose that $X \pten Y$ is reflexive and assume that both $(X, \R)$ and $(Y, \R)$ satisfy the {\bf L}$_{o,o}$ for linear functionals. By contradiction, let us assume that $(X \times Y; \R)$ fails to have the {\bf L}$_{o,o}$ for bilinear forms. Then, there exist $\eps_0 > 0$, $B \in \mathcal{B}(X \times Y; \R)$ with $\|B\| = 1$, and $(x_n, y_n)_{n=1}^{\infty} \subseteq S_X \times S_Y$ such that
\begin{equation} \label{ineq2}
	1 \geq B(x_n, y_n) \geq 1 - \frac{1}{n}
\end{equation}	
and whenever $(x_n', y_n') \subset S_X \times S_Y$ is such that $B(x_n', y_n') = 1$, we have that 
\begin{equation} \label{ineq3} 
\|x_n' - x_n\| \geq \eps_0 \ \ \ \ \mbox{and} \ \ \ \ \|y_n' - y_n\| \geq \eps_0.
\end{equation}

Since $X$ and $Y$ are reflexive and both $(x_n)_{n=1}^{\infty}$ and $(y_n)_{n=1}^{\infty}$ are bounded, we may assume (and we do) that $x_n \stackrel{w}{\longrightarrow} x_0$ and $y_n \stackrel{w}{\longrightarrow} y_0$ for some $x_0 \in B_X$ and $y_0 \in B_Y$.  Now, since $X \pten Y$ is reflexive and we are assuming that either $X$ or $Y$ enjoys the AP, by \cite[Theorem 4.21]{rya}, we have that $(X \pten Y)^* = \mathcal{L}(X, Y^*) = \mathcal{K}(X, Y^*)$. Let $T \in \mathcal{K}(X, Y^*)$ be arbitrary. Since $x_n \stackrel{w}{\longrightarrow} x_0$ and $T$ is completely continuous (i.e. $T(K)$ is a compact  subset of $Y^*$ whenever $K$ is a weakly compact subset of $X$), we have that $T(x_n) \stackrel{\|\cdot\|}{\longrightarrow} T(x_0)$ and then since
\begin{equation*}
	|T(x_n)(y_n) - T(x_0)(y_0)| \leq \|T(x_n) - T(x_0)\| \|y_n\| + |T(x_0)(y_n) - T(x_0)(y_0)|,
\end{equation*}	
we have that 
\begin{equation} \label{conv1} 
	T(x_n)(y_n) \longrightarrow T(x_0)(y_0)
\end{equation}	
as $n \rightarrow \infty$ for every $T \in \mathcal{K}(X, Y^*) = (X \pten Y)^*$. This means that $x_n \otimes y_n \stackrel{w}{\longrightarrow} x_0 \otimes y_0$. In particular, since $B \in \mathcal{B}(X \times Y; \R) = (X \pten Y)^*$, we have that $B(x_n, y_n) \longrightarrow B(x_0, y_0)$ and by (\ref{ineq2}), $B(x_0, y_0) = 1$. In particular, $x_0 \in S_X$ and $y_0 \in S_Y$.

Let us consider $T_B \in \mathcal{L}(X, Y^*)$ and $S_B \in \mathcal{L}(Y, X^*)$ to be the associated linear operators to the bilinear form $B$. Therefore, we have that 
\begin{equation*}
	T_B^*(y_0)(x_0) = T_B(x_0)(y_0) = B(x_0, y_0) = 1,
\end{equation*}
which shows that $T_B^*(y_0) \in S_{X^*}$. Analogously, we have that $S_B^*(x_0) \in S_{Y^*}$.

\vspace{0.2cm} 
\noindent
{\it Claim}: We have that
\begin{itemize}
	\item[(a)] $T_B^*(y_0)(x_n) \longrightarrow 1$ as $n \rightarrow \infty$. 
	\item[(b)] $S_B^*(x_0)(y_n) \longrightarrow 1$ as $n \rightarrow \infty$. 
\end{itemize}

We prove (a) since (b) is analogous. As $T_B^*$ is a compact operator and $y_n \stackrel{w}{\longrightarrow} y_0$, we have that $T_B^*(y_n) \stackrel{\|\cdot\|}{\longrightarrow} T_B^*(y_0)$. At the same time, by (\ref{conv1}) we have that 
\begin{equation*}
	T_B^*(y_n)(x_n) = T_B(x_n)(y_n) \longrightarrow T_B(x_0)(y_0) = 1
\end{equation*}
as $n \rightarrow \infty$. Therefore, 
\begin{eqnarray*}
	|T_B^*(y_n)(x_n) - T_B^*(y_0)(x_n)| &=& |(T_B^*(y_n) - T_B^*(y_0)(x_n))| \\
	&\leq& \|T_B^*(y_n) - T_B^*(y_0)\| \longrightarrow 0
\end{eqnarray*}
and then $T_B^*(y_0)(x_n) \longrightarrow 1$ as $n \rightarrow \infty$.

\vspace{0.3cm}

Now we prove that $\|x_n - x\| \longrightarrow \infty$ as $n \rightarrow \infty$. Assume first that $X$ satisfies the Kadec-Klee property. Since $x_0 \in S_X$ and $x_n \stackrel{w}{\longrightarrow} x_0$, we have that $\|x_n - x_0\| \rightarrow 0$ as $n \rightarrow \infty$. We prove that the same holds if $X$ is taken to be strictly convex. Indeed, by using item (a) of Claim, we have that $T_B^*(y_0)(x_n) \longrightarrow 1$ as $n \rightarrow \infty$. Since $(X, \R)$ satisfies the {\bf L}$_{o,o}$ and $X$ is strictly convex, we have that $X^*$ is Fr\'echet differentiable (see \cite[Theorem 2.5.(b)]{DKLM0} and then, by the \v{S}mulyan lemma, we have that $\|x_n - x_0\| \longrightarrow 0$ as $n \rightarrow \infty$ as desired.

To conclude the proof of this implication, let us set $y_0^* := S_B^*(x_0) \in S_{Y^*}$. Then, $y_0^*(y_0) = 1$ and $y_0^*(y_n) \longrightarrow 1$ as $n \rightarrow \infty$ by Claim (b). Since $(Y, \R)$ has the {\bf L}$_{o,o}$ for linear functionals, there is $(y_n') \subseteq S_Y$ such that $y_0^*(y_n') = 1$ and $\|y_n' - y_n\| \longrightarrow 0$ as $n \rightarrow \infty$. This means that 
\begin{equation*}
1 = y_0^*(y_n') = S_B^*(x_0)(y_n') = B(x_0, y_n').	
\end{equation*} 
Since $\|x_n - x_0\| \longrightarrow 0$ and $\|y_n' - y_n\| \longrightarrow 0$ as $n \rightarrow \infty$, we get a contradiction with (\ref{ineq3}).

\vspace{0.2cm}
\noindent
(iii) $\Rightarrow$ (i). Suppose that $(X \times Y; \R)$ has the {\bf L}$_{o,o}$ for bilinear forms. To prove that  $(X \pten Y, \R)$ has the {\bf L}$_{o,o}$ for linear functionals, let us fix $B \in (X \pten Y)^* = \mathcal{B}(X \times Y; \R)$ with $\|B\| = 1$. Let us take $\eps>0$ to be such that $2 \eps (1 + \eps^2) + \eps < 1$. By hypothesis, there is $\eta(\eps, B) > 0$.

Let $z \in S_{X \pten Y}$ be such that $z= \sum_{n=1}^{\infty} \lambda_n x_n \otimes y_n$, where $(x_n)_{n=1}^{\infty} \subseteq S_X$, $(y_n)_{n=1}^{\infty} \subseteq S_Y$, $(\lambda_n)_{n=1}^{\infty} \subseteq (0,1)$ with 
\begin{equation*}
\sum_{n=1}^{\infty} \lambda_n < 1 + \frac{\eps^2}{2}
\end{equation*}
satisfying $\langle B, z \rangle > 1 - \widetilde{\eta}(\eps, B)$, where
\begin{equation*}
\widetilde{\eta} (\eps, B) := \min \left\{ \frac{\eps}{2}, \eta(\eps, B) \right\} > 0.
\end{equation*}
Consider the sets
\begin{equation*}
I:= \Big\{ n \in \N: B(x_n, y_n) > 1 - \min \{ \eps, \eta(\eps, B) \Big\} 
\end{equation*}
and $J:= I^c$. Then,
\begin{eqnarray*}
1 - \widetilde{\eta}(\eps, B) < \langle B, z \rangle &=& \sum_{n=1}^{\infty} \lambda_n B(x_n, y_n) \\
&\leq& \sum_{n \in I} \lambda_n + (1 - \eps) \sum_{n \in J} \lambda_n \\
&=& \sum_{n=1}^{\infty} \lambda_n - \eps \sum_{n \in J} \lambda_n \\ 
&<& 1 + \frac{\eps^2}{2} - \eps \sum_{n \in J} \lambda_n.	
\end{eqnarray*} 
This clearly implies that
\begin{equation} \label{sum-ineq} 
\sum_{n \in J} \lambda_n < \eps.
\end{equation}
On the other hand, for every $n \in I$, we have that $B(x_n, y_n) > 1 - \eta(\eps, B)$. Since $(X \times Y; \R)$ has the {\bf L}$_{o,o}$ for bilinear forms, there is $(x_n', y_n') \subseteq S_X \times S_Y$ such that 
\begin{equation*}
B(x_n', y_n') = 1, \ \ \ \ \|x_n' - x_n\| < \eps, \ \ \ \ \mbox{and} \ \ \ \ \|y_n' - y_n\| < \eps 
\end{equation*}
for every $n \in I$. Let us then define the tensor
\begin{equation*}
z' := \sum_{n \in I} \lambda_n x_n' \otimes y_n' \ \in X \pten Y.
\end{equation*}
So, we have that 
\begin{equation*}
\langle B, z' \rangle = \sum_{n \in I} \lambda_n B(x_n', y_n') = \sum_{n \in I} \lambda_n = \|z'\|_{\pi}.
\end{equation*}
On the other hand, since 
\begin{eqnarray*}
\|x_n' \otimes y_n' - x_n \otimes y_n\| &\leq& \|x_n' \otimes y_n' - x_n' \otimes y_n\| + \|x_n' \otimes y_n - x_n \otimes y_n\| \\
&\leq& \|y_n' - y_n\| + \|x_n' - x_n\| \\
&<& 2 \eps
\end{eqnarray*} 
we have, by using (\ref{sum-ineq}), that
\begin{eqnarray*}
\|z' - z\|_{\pi} &=& \left\| \sum_{n \in I} \lambda_n x_n' \otimes y_n' - \sum_{n \in I} \lambda_n x_n \otimes y_n - \sum_{n \in J} \lambda_n x_n \otimes y_n \right\| \\
&\leq& \sum_{n \in I} \lambda_n \|x_n' \otimes y_n' - x_n \otimes y_n \| + \sum_{n \in J} \lambda_n \\
&<& 2 \eps \sum_{n \in I} \lambda_n + \sum_{n \in J} \lambda_n \\
&<& 2\eps (1 + \eps^2) + \eps. 	
\end{eqnarray*} 
So, $\|z'\| > \|z\| - 2\eps(1 + \eps^2) + \eps = 1 - 2\eps(1 + \eps^2) + \eps>0$. Finally, let us set $z'' := \frac{z'}{\|z'\|} \in S_{X \pten Y}$. We have that 
\begin{equation*}
\|z'' - z'\| = \left\| \frac{z'}{\|z'\|} - z' \right\| = |1 - \|z'\|| \leq \|z - z'\| < 2\eps(1 + \eps^2) + \eps
\end{equation*}
and so 
\begin{equation*}
\|z'' - z\| \leq \|z'' - z'\| + \|z' - z\| < 4 \eps (1 + \eps^2) + 2 \eps. 
\end{equation*}
Moreover, 
\begin{equation*} 
\langle B, z'' \rangle = \left\langle B, \frac{z'}{\|z'\|} \right\rangle = 1.
\end{equation*} 
This shows that $(X \pten Y, \R)$ satisfies the {\bf L}$_{o,o}$ for linear functionals. 
\end{proof}

\begin{remark} \label{complexcase} Let us comment on the proof of Theorem B in the complex case. It is clear that (i) $\Rightarrow$ (ii) works in both cases immediately. In (ii) $\Rightarrow$ (iii), one can get (\ref{ineq2}) by multiplying $B(x_n, y_n)$ for suitable rotations $e^{i \theta_n}$ for $\theta_n \in \R$ and then the proof follows the same lines. The most delicate implication is (iii) $\Rightarrow$ (ii). In this implication, what we do is to define
	\begin{equation*}
		I:= \Big\{ n \in \N: \re B(x_n, y_n) > 1 - \min \{ \eps, \eta(\eps, B) \} \Big\}. 
	\end{equation*} 
When $n \in I$, we use the assumption that $(X \times Y; \K)$ has the {\bf L}$_{o,o}$ for bilinear forms, to get $(x_n', y_n') \subseteq S_X \times S_Y$ such that 
\begin{equation*} 
|B(x_n', y_n')| = 1, \ \ \ \ \|x_n' - x_n\| < \eps, \ \ \ \ \mbox{and} \ \ \ \  \|y_n' - y_n\| < \eps. 
\end{equation*} 
We then write $B(x_n', y_n') = e^{i \theta_n}$ with some $\theta_n \in \R$ for every $n \in I$ and use the fact that $|1 - e^{i \theta}| < \sqrt{2 \eta(\eps, B)}$ to get that
\begin{equation*}
	z' := \sum_{n \in I} \lambda_n e^{-i \theta_n} x_n' \otimes y_n' 
\end{equation*}
 is arbitrarily close to $z$ and $\left\langle B, \frac{z'}{\|z'\|} \right\rangle = 1$.
\end{remark}

\begin{remark}\label{remark:ap}
An inspection in the proof of Theorem B reveals that the assumption that $X$ or $Y$ enjoys the AP is just used in the implication (ii)$\Rightarrow$(iii); the other implications hold in complete generality.
\end{remark}

As an immediate consequence of Theorem B, we have the following corollary. Notice that item (a) below was proved also in \cite[Theorem 2.7.(b)]{DKLM}.

\begin{corollary} Let $1 < p, q < \infty$ and let $q'$ be the conjugate index of $q$. 
		\begin{itemize}
		\item[(a)] $(\ell_p \times \ell_q; \K)$ satisfies the {\bf L}$_{o,o}$ for bilinear forms if and only if $p > q'$.
		\item[(b)] $(L_p(\mu), L_q(\nu); \K)$ fails the {\bf L}$_{o,o}$ for bilinear forms for non-atomic measures $\mu, \nu$.
	\end{itemize}
\end{corollary}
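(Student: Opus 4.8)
The plan is to derive both statements directly from Theorem B, reducing everything to a reflexivity obstruction for the projective tensor product. The key observation is that Theorem B is an equivalence, so to \emph{disprove} the $\mathbf{L}_{o,o}$ for bilinear forms on $X\times Y$ it suffices to exhibit a single failure in any one of the three equivalent conditions; the cheapest one to violate is the reflexivity of $X\pten Y$ appearing in (ii). One caveat: Theorem B requires that one of the factors be strictly convex or have the Kadec--Klee property, and that one of them have the approximation property; so before invoking it I must check these hypotheses hold for the spaces at hand.

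For item (a), let $1<p,q<\infty$. Both $\ell_p$ and $\ell_q$ are uniformly convex, hence strictly convex, and both have a Schauder basis, hence the approximation property, so Theorem B applies to the pair $(\ell_p,\ell_q)$. By the equivalence (i)$\Leftrightarrow$(ii)$\Leftrightarrow$(iii), $(\ell_p\times\ell_q;\K)$ has the $\mathbf{L}_{o,o}$ for bilinear forms if and only if $\ell_p\pten\ell_q$ is reflexive \emph{and} both $(\ell_p,\K)$, $(\ell_q,\K)$ have the $\mathbf{L}_{o,o}$; but the latter two always hold since $\ell_p,\ell_q$ are uniformly convex (indeed, $X^*$ is then Fr\'echet differentiable, so SSD). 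Thus the condition collapses to: $\ell_p\pten\ell_q$ is reflexive. By \cite[Corollary 4.24]{rya} (already cited in the introduction), $\ell_p\pten\ell_q$ is reflexive precisely when $p>q'$, which gives (a). I would phrase this carefully to note that the forward direction of (a) also recovers \cite[Theorem 2.7.(b)]{DKLM}.

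For item (b), take $X=L_p(\mu)$ and $Y=L_q(\nu)$ with $\mu,\nu$ non-atomic and $1<p,q<\infty$. Again both spaces are uniformly convex, hence strictly convex, and both have the approximation property (they have unconditional bases in the separable case, and in general $L_p$-spaces are known to have the AP), so Theorem B is applicable. Now I invoke \cite[Theorem 4.21 and Corollary 4.22]{rya}: for non-atomic measures the projective tensor product $L_p(\mu)\pten L_q(\nu)$ is \emph{never} reflexive (it contains a complemented copy of $\ell_1$ or fails reflexivity for the relevant index reasons recorded there). Since condition (ii) of Theorem B fails outright, condition (iii) must fail as well, i.e.\ $(L_p(\mu)\times L_q(\nu);\K)$ cannot have the $\mathbf{L}_{o,o}$ for bilinear forms. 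This is exactly the consequence already announced after the statement of Theorem B in the introduction, so item (b) is just the formal write-up of that remark.

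The only real obstacle is a bookkeeping one: making sure the hypotheses of Theorem B are genuinely satisfied, namely that the relevant $L_p$/$\ell_p$ spaces are strictly convex (immediate from uniform convexity for $1<p<\infty$) and carry the approximation property (standard, but worth a one-line citation), and then correctly quoting the reflexivity criteria for $\ell_p\pten\ell_q$ and $L_p\pten L_q$ from \cite{rya}. No new analysis is needed beyond these citations; the proof is a two- or three-line deduction in each case, and I would present it as such rather than reproving any tensor-product reflexivity facts.
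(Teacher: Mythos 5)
Your proposal is correct and follows essentially the same route as the paper: both reduce the corollary to Theorem B together with the reflexivity criteria for $\ell_p \pten \ell_q$ and $L_p(\mu) \pten L_q(\nu)$ from Ryan's book. Your additional verification that the hypotheses of Theorem B (strict convexity, the approximation property, and the {\bf L}$_{o,o}$ for $(\ell_p,\K)$ and $(\ell_q,\K)$ via uniform convexity) are satisfied is a useful bit of bookkeeping that the paper leaves implicit.
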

	
\begin{proof} Under the assumption of (a), we have that the projective tensor product $\ell_p \pten \ell_q$ is reflexive (see \cite[Corollary 4.24]{rya}). For (b), since $L_p (\mu) \pten L_q (\nu)$ contains complemented isomorphic copies of $\ell_1$ for every $p, q$, it is never reflexive (see \cite[Theorem 4.21 and Corollary 4.22]{rya}). Therefore, both items follow immediately by applying Theorem B.
\end{proof}

Let us conclude the paper by commenting on the {\bf L}$_{o,o}$ for different classes of functions. Let $X$ and $Y$ be Banach spaces. In $\mathcal{B}(X \times Y; \K)$, as we have seen in Theorem B, one can consider:
\begin{itemize}
	\item[(A)] the {\bf L}$_{o,o}$ for linear functionals seeing $\mathcal{B}(X \times Y; \K)$ as $(X \pten Y)^*$,
	\item[(B)] the {\bf L}$_{o,o}$ for operators seeing $\mathcal{B}(X \times Y; \K)$ as $\mathcal{L}(X, Y^*)$, and, of course, 
	\item[(C)] the {\bf L}$_{o,o}$ for bilinear forms. 
\end{itemize}

We have the following relation between properties (A), (B), and (C): 

\begin{itemize}
\item {\it General implications.} Clearly, we have that (C) $\Rightarrow$ (B) by considering the associated bilinear for $B_T \in \mathcal{B}(X \times Y; \K)$ of a given operator $T \in \mathcal{L}(X, Y^*)$. Also, by our Theorem B (implication (iii) $\Rightarrow$ (i)) and noticing that, for this implication, we do not need any assumption on $X$ besides reflexivity (not even approximation property assumptions, see Remark \ref{remark:ap}), we also have that (C) $\Rightarrow$ (A). 

\vspace{0.2cm}

\item {\it Not true implications.} (B) does not imply (A) or (C) in general. Indeed, by \cite[Theorem 2.4.10]{alka}, for every $1 < p < \infty$, we have that $\mathcal{L}(c_0, \ell_p) = \mathcal{K}(c_0, \ell_p) = \mathcal{L}(\ell_{p'}, \ell_1)$, where $p'$ is the conjugate index of $p$. We have that $(\ell_{p'}, \ell_1)$ has the {\bf L}$_{o,o}$ for operators by Theorem A (since $(\ell_{p'}, \K)$ has the {\bf L}$_{o,o}$ for linear functionals) but neither $(\ell_{p'} \times c_0; \K)$ nor $(\ell_{p'} \pten c_0; \K)$ can have the {\bf L}$_{o,o}$ for bilinear forms and for linear functionals, respectively, since $c_0$ is not reflexive.

\vspace{0.2cm} 

\item {\it With extra assumptions implications.} Assume that either $X$ or $Y$ has the AP. In this case, implication (A) $\Rightarrow$ (B) holds. Indeed, if $(X \pten Y; \K)$ has the {\bf L}$_{o,o}$ for linear functionals, then $X \pten Y$ must be reflexive and, by the assumption that $X$ or $Y$ has the AP, every operator from $X$ into $Y^*$ is compact and by Theorem B (implication (i) $\Rightarrow$ (ii)), the pair $(X, \K)$ has the {\bf L}$_{o,o}$ for linear functionals. By Theorem A, the pair $(X, Y^*)$ has the {\bf L}$_{o,o}$ for operators. Finally, if $X$ or $Y$ has the AP and $X$ or $Y$ is stricly convex, then (A)$\Rightarrow$(C)
\end{itemize}

\noindent 
\textbf{Acknowledgements:}  S. Dantas was supported by Spanish AEI Project PID2019 - 106529GB - I00 / AEI / 10.13039/501100011033 and also by PGC2018 - 093794 - B - I00 (MCIU/AEI/FEDER, UE). A. Rueda Zoca was supported by Juan de la Cierva-Formaci\'on fellowship FJC2019-039973, by MTM2017-86182-P (Government of Spain, AEI/ FEDER, EU), by MICINN (Spain) Grant PGC2018-093794-B-I00 (MCIU, AEI, FEDER, UE), by Fundaci\'on S\'eneca, ACyT Regi\'on de Murcia grant 20797/PI/18, by Junta de Andaluc\'ia Grant A-FQM-484-UGR18 and by Junta de Andaluc\'ia Grant FQM-0185.

\end{document}